\newtheorem{thm}{Theorem}[section]
\newtheorem{proposition}[thm]{Proposition}
\newtheorem{definition}[thm]{Definition}
\newtheorem{remark}[thm]{Remark}
\newtheorem{example}{Example}
\def\a{\alpha}
\def\t{\tau}
\def\LDa{{_aD_t^\a}}
\def\LDaT{{_a\widetilde{D}_t^\a}}
\def\RD{{_tD_b^\a}}
\def\LDz{{_0D_t^{0.5}}}
\def\GLa{{^{GL}_{\phantom{1}a}D_t^\a}}
\def\GLb{{^{GL}_{\phantom{1}t}D_b^\a}}
\def\sGLa{{^{sGL}_{\phantom{1}a}D_t^\a}}
\def\w{\left(\omega_k^\a\right)}
\def\wh{\left(\omega_k^{0.5}\right)}
\newenvironment{keywords}{
\begin{center}
\begin{minipage}[c]{13.4cm}
{\bf Keywords:}}
{\end{minipage}
\end{center}}
\newenvironment{msc}{
\begin{center}
\begin{minipage}[c]{13.4cm}
{\bf 2010 Mathematics Subject Classification:}}
{\end{minipage}
\end{center}}
\begin{document}

\title{\text{Discrete Direct Methods in the Fractional Calculus of Variations}\footnote{This work
was partially presented 16-May-2012 by Shakoor Pooseh at FDA'2012,
who received a 'Best Oral Presentation Award'. Part of first author's Ph.D.,
which is carried out at the University of Aveiro under the Doctoral Program
in Mathematics and Applications (PDMA) of Universities of Aveiro and Minho.\newline
Submitted 26-Aug-2012; revised 25-Jan-2013; accepted 29-Jan-2013; 
for publication in \emph{Computers and Mathematics with Applications}.}}

\author{Shakoor Pooseh\\
\texttt{spooseh@ua.pt}
\and Ricardo Almeida\\
\texttt{ricardo.almeida@ua.pt}
\and Delfim F. M. Torres\\
\texttt{delfim@ua.pt}}

\date{CIDMA -- Center for Research and Development in Mathematics and Applications,\\
Department of Mathematics, University of Aveiro, 3810-193 Aveiro, Portugal}

\maketitle

% ===================================================

\begin{abstract}
Finite differences, as a subclass of direct methods in the calculus of variations,
consist in discretizing the objective functional using appropriate approximations
for derivatives that appear in the problem. This article generalizes the same idea
for fractional variational problems. We consider a minimization problem with a Lagrangian
that depends on the left Riemann--Liouville fractional derivative.
Using the Gr\"{u}nwald--Letnikov definition, we approximate the objective functional
in an equispaced grid as a multi-variable function of the values
of the unknown function on mesh points. The problem is then transformed
to an ordinary static optimization problem. The solution to the latter problem gives
an approximation to the original fractional problem on mesh points.
\end{abstract}

\begin{keywords}
Fractional calculus, fractional calculus of variations, direct methods.
\end{keywords}

\begin{msc}
26A33, 49M25.
\end{msc}

% ===================================================

\section{Introduction}

Recently, fractional calculus, a classical branch of mathematical analysis
that studies non-integer powers of differentiation operators, has proved
a huge potential in solving complicated problems from science and engineering
\cite{Ref01,Ref02,Ref03}. In this framework,
the fractional calculus of variations is a research area under
strong current development. For the state of the art,
we refer the reader to the recent book \cite{book:frac},
for models and numerical methods we refer to \cite{book:Baleanu}.
A fractional variational problem consists in finding the extremizer
of a functional that depends on fractional derivatives and/or integrals
subject to some boundary conditions and possibly some extra constraints.
In this work we consider the following minimization problem:
\begin{equation}
\label{MainProblem}
\begin{gathered}
J[x(\cdot)]=\int_a^b L(t, x(t), \LDa x(t))dt \longrightarrow \min,\\
x(a)=x_a, \quad x(b)=x_b,
\end{gathered}
\end{equation}
that depends on the left Riemann--Liouville derivative,
$\LDa$, which is defined as follows.

\begin{definition}[see, e.g., \cite{Kilbas}]
Let $x(\cdot)$ be an absolutely continuous function in $[a,b]$ and $0\leq \a<1$.
The left Riemann--Liouville fractional derivative of order $\alpha$, $\LDa$,
is given by
\begin{equation*}
\LDa x(t)=\frac{1}{\Gamma(1-\alpha)}\frac{d}{dt}
\int_a^t (t-\t)^{-\alpha}x(\t)d\t, \quad t\in [a,b].
\end{equation*}
\end{definition}

There are several different definitions for fractional derivatives,
left and right, that can be found in the literature and could also be included.
They posses different properties: each one of those definitions
has its own advantages and disadvantages. Under certain conditions
they are, however, equivalent and can be used interchangeably.

There are two major approaches in the classical theory of calculus of variations.
In one hand, using Euler--Lagrange necessary optimality conditions, we can reduce
a variational problem to the study of a differential equation. Hereafter,
one can use either analytical or numerical methods to solve the differential
equation and reach the solution of the original problem (see, e.g., \cite{Kirk}).
This approach is referred as indirect methods in the literature. On the other hand,
we can tackle the functional itself, directly. Direct methods are used
to find the extremizer of a functional in two ways: Euler's finite differences
and Ritz methods. In the Ritz method, we either restrict admissible functions
to all possible linear combinations
$x_n=\sum_{i=1}^{n}\a_i\phi_i(t)$,
with constant coefficients $\a_i$ and a set of known base functions $\phi_i$,
or we approximate the admissible functions with such combinations. Using $x_n$
and its derivatives whenever needed, one can transform the functional
to a multivariate function of unknown coefficients $\a_i$.
By finite differences, however, we consider the admissible functions
not on the class of arbitrary curves, but only on polygonal curves
made upon a given grid on the time horizon. Using an appropriate
discrete approximation of the Lagrangian, and substituting the integral with a sum,
we can transform the main problem to the optimization of a function of several parameters:
the values of the unknown function on mesh points (see, e.g., \cite{Elsgolts}).

Indirect methods for fractional variational problems have a vast background
in the literature and can be considered a well studied subject: see
\cite{Agrawal,pooseh:2012,Atan1,Frederico,Jelicic,Klimek,MyID:227,Riewe97}
and references therein that study different variants of the problem and discuss
a bunch of possibilities in the presence of fractional terms, Euler--Lagrange
equations and boundary conditions. Direct methods, however, to the best of our knowledge,
have got less interest and are not well studied. A brief introduction
of using finite differences has been made in \cite{Riewe96}, which can be regarded
as a predecessor to what we call here an Euler-like direct method.
A generalization of Leitmann's direct method can be found in \cite{AlD1},
while \cite{Lotfi} discusses the Ritz direct method for optimal control problems
that can easily be reduced to a problem of the calculus of variations.

% ===================================================

\section{Direct methods in the classical theory}

The basic idea of direct methods is to consider the variational problem
as a limiting case of a certain extremum problem of a multi-variable function.
This is then an ordinary static optimization problem. The solution
to the latter problem can be regarded as an approximate solution
to the original variational problem. There are three major methods:
Euler's finite differences, Ritz and Kantorovich's methods.
We are going to discuss the Euler method briefly: see, e.g., \cite{Elsgolts}.
The basic idea of a finite differences method is that instead
of considering the values of a functional
\begin{equation*}
J[x(\cdot)]=\int_a^b L(t, x(t), \dot{x}(t))dt
\end{equation*}
with boundary conditions $x(a)=x_a$ and $ x(b)=x_b$,
on arbitrary admissible curves, we only track the values at an $n+1$ points grid,
$t_i$, $i=0,\ldots,n$, in the interested time interval. The functional $J[x(\cdot)]$
is then transformed into a function $\Psi(x(t_1),x(t_2),\ldots,x(t_{n-1}))$
of the values of the unknown function on mesh points. Assuming
$h=t_{i}-t_{i-1}$, $x(t_i)=x_i$ and $\dot{x}_i\simeq \frac{x_{i}-x_{i-1}}{h}$, one has
\begin{gather*}
J[x(\cdot)] \simeq \Psi(x_1,x_2,\ldots,x_{n-1})
= h\sum_{i=1}^{n}L\left(t_i, x_i,\frac{x_{i}-x_{i-1}}{h}\right),\\
x_0=x_a, \quad x_n=x_b.
\end{gather*}
The desired values of $x_i$, $i=1,\ldots,n-1$, give the extremum
to the multi-variable function $\Psi$ and satisfy the system
$$
\frac{\partial \Psi}{\partial x_i}=0,\quad i=1,\ldots,n-1.
$$

The fact that only two terms in the sum, $(i-1)$th and $i$th,
depend on $x_i$ makes it rather easy to find the extremum of $\Psi$
solving a system of algebraic equations. For each $n$, we obtain
a polygonal line, which is an approximate solution to the original problem.
It has been shown that passing to the limit, as $h\rightarrow 0$,
the linear system corresponding to finding the extremum of $\Psi$
is equivalent to the Euler--Lagrange equation for the original problem \cite{Tuckey}.

% ===================================================

\section{Finite differences for fractional derivatives}

In classical theory, given a derivative of certain order $x^{(n)}$,
there is a finite difference approximation of the form
\begin{equation*}
x^{(n)}(t)=\lim_{h\rightarrow 0^+}
\frac{1}{h^n} \sum_{k=0}^n(-1)^k\binom{n}{k}x(t-kh),
\end{equation*}
where
\begin{equation}
\label{binom}
\binom{n}{k}=\frac{n(n-1)(n-2)\cdots (n-k+1)}{k!}.
\end{equation}
This is generalized to derivatives of arbitrary order
and gives rise to the Gr\"{u}nwald--Letnikov fractional derivative.

\begin{definition}[see, e.g., \cite{Kilbas}]
Let $0<\a<1$. The left Gr\"{u}nwald--Letnikov fractional derivative is defined as
\begin{equation}
\label{LGLdef}
\GLa x(t)=\lim_{h\rightarrow 0^+} \frac{1}{h^\a}
\sum_{k=0}^\infty(-1)^k\binom{\a}{k}x(t-kh).
\end{equation}
Here $\binom{\a}{k}$ is the generalization of binomial coefficients \eqref{binom} to real numbers.
Similarly, the right Gr\"{u}nwald--Letnikov derivative is given by
\begin{equation}
\label{RGLdef}
\GLb x(t)=\lim_{h\rightarrow 0^+} \frac{1}{h^\a}
\sum_{k=0}^\infty(-1)^k\binom{\a}{k}x(t+kh).
\end{equation}
\end{definition}

The series in \eqref{LGLdef} and \eqref{RGLdef} converges absolutely and uniformly
if $x(\cdot)$ is bounded. The definition is clearly affected by the non-local property
of fractional derivatives. The arbitrary order derivative of a function at a time $t$
depends on all values of that function in $(-\infty,t]$ and $[t,\infty)$ because
of the infinite sum, backward and forward difference nature of the left and right derivatives,
respectively. Since we are, usually, and specifically in this paper, dealing with closed time intervals,
the following remark is made to make the definition clear in closed regions.

\begin{remark}
\label{RemFinite}
For the above definition to be consistent, we need the values of $x(t)$
outside the interval $[a,b]$. To overcome this difficulty, we take
\begin{equation*}
x^*(t)
=
\begin{cases}
x(t) & \text{ if } \ t\in [a,b],\\
0 & \text{ if } \ t\notin [a,b].
\end{cases}
\end{equation*}
Thus, one can assume $\GLa x(t)=\GLa x^*(t)$
and $\GLb x(t)=\GLb x^*(t)$ for $t\in [a,b]$.
\end{remark}

As we mentioned before, this definition coincides with Riemann--Liouville derivatives.
The following proposition establishes the connection between these two definitions
and that of Caputo derivative, another type of derivative that is believed
to be more applicable in practical fields such as engineering and physics.

\begin{proposition}[see, e.g., \cite{Podlubny}]
Let $n-1< \a<n$, $n\in \mathbb{N}$, and $x(\cdot)\in C^{n-1}[a,b]$.
Suppose also that $x^{(n)}(\cdot)$ is integrable on $[a,b]$. Then the
Riemann--Liouville derivative exists and coincides with the
Gr\"{u}nwald--Letnikov:
\begin{equation*}
\LDa x(t) = \sum_{i=0}^{n-1}\frac{x^{(i)}(a)(t-a)^{i-\a}}{\Gamma(1+i-\a)}
+\frac{1}{\Gamma(n-\a)}\int_a^t(t-\t)^{n-1-\a}x^{(n)}(\t)d\t\\
=\GLa x(t).
\end{equation*}
\end{proposition}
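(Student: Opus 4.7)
The plan is to split the claim into two separate equalities: first that $\LDa x(t)$ equals the sum-plus-integral expression, and second that this expression equals $\GLa x(t)$. The first equality is essentially an algebraic manipulation based on repeated integration by parts; the second is a genuine limit-passage argument.

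For the first equality I would begin from the standard generalized Riemann--Liouville form
\begin{equation*}
\LDa x(t)=\frac{1}{\Gamma(n-\alpha)}\frac{d^{n}}{dt^{n}}\int_{a}^{t}(t-\tau)^{n-1-\alpha}x(\tau)\,d\tau,
\end{equation*}
valid for $n-1<\alpha<n$ (the natural extension of the definition stated earlier in the paper). I would then integrate by parts $n$ successive times, using $u=x^{(j)}(\tau)$ and $dv=(t-\tau)^{n-1-\alpha+j}\,d\tau$ at the $(j{+}1)$th step. At every step the $\tau=t$ endpoint contributes zero since the exponent on $(t-\tau)$ stays strictly positive, while the $\tau=a$ endpoint produces a term proportional to $x^{(j)}(a)(t-a)^{n-\alpha+j}$ divided by a product of consecutive linear factors. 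Applying $d^{n}/dt^{n}$ afterwards---power rule on the polynomial boundary terms, Leibniz under the remaining integral (whose own boundary contributions again vanish because the exponent is still positive)---and rewriting all products of consecutive factors as ratios of Gamma functions reproduces precisely the asserted sum $\sum_{i=0}^{n-1} x^{(i)}(a)(t-a)^{i-\alpha}/\Gamma(1+i-\alpha)$ together with the integral $\Gamma(n-\alpha)^{-1}\int_{a}^{t}(t-\tau)^{n-1-\alpha}x^{(n)}(\tau)\,d\tau$.

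For the second equality I would invoke Remark~\ref{RemFinite} to replace the infinite Gr\"{u}nwald--Letnikov series by a finite one of length $N=\lfloor(t-a)/h\rfloor$, so that
\begin{equation*}
\GLa x(t)=\lim_{h\to 0^{+}} h^{-\alpha}\sum_{k=0}^{N}(-1)^{k}\binom{\alpha}{k}x(t-kh).
\end{equation*}
The main tool is Abel's summation by parts, applied $n$ successive times: each application transfers one backward finite difference from the binomial coefficient onto the sequence $\bigl(x(t-kh)\bigr)$ and simultaneously extracts one boundary contribution evaluated at $k=N$, i.e.\ at $\tau\approx a$. Combining a standard partial-sum identity for $(-1)^{k}\binom{\alpha-i}{k}$ with the asymptotic $\binom{N-\beta}{N}\sim N^{-\beta}/\Gamma(1-\beta)$ (so that $h^{-\alpha}$ combines with $N^{i-\alpha}$ and $h^{i}$ to give the correct power $(t-a)^{i-\alpha}$) converts the $i$th boundary term into $x^{(i)}(a)(t-a)^{i-\alpha}/\Gamma(1+i-\alpha)$. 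The leftover finite sum features the $n$th backward difference quotient $\Delta_{h}^{n}x(t-kh)/h^{n}$, which converges pointwise to $x^{(n)}$, and its rescaled binomial weight is a Riemann sum for $\Gamma(n-\alpha)^{-1}(t-\tau)^{n-1-\alpha}$; dominated convergence, legitimate by the integrability of $x^{(n)}$, completes the limit-passage.

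The principal obstacle is the second stage. The $n$-fold Abel transformation, the correct identification of the Gamma-function coefficients emerging from cascading binomial asymptotics, and the uniform-in-$h$ domination needed to justify the interchange of limit and summation are all delicate. By contrast, the first stage reduces to careful bookkeeping of integration-by-parts iterations together with Gamma-function telescoping.
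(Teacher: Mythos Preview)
The paper does not prove this proposition at all; it is quoted as a known result from Podlubny's monograph, with no argument supplied in the paper itself. Your two-stage outline --- $n$-fold integration by parts to rewrite the Riemann--Liouville derivative, then $n$-fold Abel summation by parts on the truncated Gr\"{u}nwald--Letnikov sum together with the binomial asymptotic $\binom{k-\beta-1}{k}\sim k^{-\beta-1}/\Gamma(-\beta)$ --- is precisely the classical argument given in the cited reference (Podlubny, \S2.2--2.3). So your proposal is correct and coincides with the proof the paper is implicitly deferring to.

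The one place where your sketch is slightly optimistic is the final limit: with $x^{(n)}$ merely integrable, pointwise convergence of the $n$th backward difference quotient to $x^{(n)}$ plus a dominated-convergence appeal is not quite enough, since Riemann sums of an $L^{1}$ function need not converge. In the standard treatment this is circumvented by expressing the $n$th difference $\Delta_{h}^{n}x(t-kh)$ as an $n$-fold iterated integral of $x^{(n)}$ over a cube of side~$h$, so that the leftover sum becomes an honest integral of $x^{(n)}$ against a step-function weight converging uniformly to $(t-\tau)^{n-1-\alpha}/\Gamma(n-\alpha)$; the interchange is then straightforward. You correctly flag this stage as the delicate one, but the fix is this integral representation of the difference rather than dominated convergence on difference quotients.
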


\begin{remark}
For numerical purposes, we need a finite sum in \eqref{LGLdef}
and this goal is achieved by Remark~\ref{RemFinite}.
Given a grid on $[a,b]$ as $a=t_0,t_1,\ldots,t_n=b$,
where $t_i=t_0+ih$ for some $h>0$,
we approximate the left Riemann--Liouville derivative as
\begin{equation}
\label{GLApprx}
\LDa x(t_i)\simeq \frac{1}{h^\a} \sum_{k=0}^{i}\w x(t_i-kh)=:\LDaT x(t_i),
\end{equation}
where $\w=(-1)^k\binom{\a}{k}=\frac{\Gamma(k-\a)}{\Gamma(-\a)\Gamma(k+1)}$.
Similarly, one can approximate the right Riemann--Liouville  derivative by
\begin{equation}
\label{GLApprxR}
\RD x(t_i)\simeq \frac{1}{h^\a} \sum_{k=0}^{n-i}\w x(t_i+kh).
\end{equation}
As it is stated in \cite{Podlubny}, this approximation is of first order, i.e.,
\begin{equation*}
\LDa x(t_i)= \frac{1}{h^\a} \sum_{k=0}^{i}\w x(t_i-kh)+\mathcal{O}(h).
\end{equation*}
\end{remark}

\begin{remark}
In \cite{Meerschaert}, it has been shown that the implicit Euler method
is unstable for certain fractional partial differential equations
with Gr\"{u}nwald--Letnikov approximations. Therefore, discretizing
fractional derivatives, shifted Gr\"{u}nwald--Letnikov derivatives are used.
Despite the slight difference with respect to standard Gr\"{u}nwald--Letnikov
derivatives, they exhibit, at least for certain cases, a stable performance.
The shifted Gr\"{u}nwald--Letnikov derivative is defined by
\begin{equation*}
\sGLa x(t_i)\simeq \frac{1}{h^\a} \sum_{k=0}^{i}\w x(t_i-(k-1)h).
\end{equation*}
\end{remark}

Other finite difference approximations can be found in the literature,
e.g., Diethelm's backward finite difference formulas
for fractional derivatives \cite{Kai2}.

% ===================================================

\section{Euler-like direct method for fractional variational problems}
\label{DirMethSec}

As mentioned earlier, we consider a simple version of the fractional
variational problem where the fractional term consists of a Riemann--Liouville
derivative on a finite time interval $[a,b]$. The boundary conditions are given
and we approximate the fractional derivative
using the Gr\"{u}nwald--Letnikov approximation given by \eqref{GLApprx}.
In this context we discretize the functional in \eqref{MainProblem}
using a simple quadrature rule on the mesh points $a=t_0, t_1, \ldots, t_n=b$
with $h=\frac{b-a}{n}$. The goal is to find the values $x_1, x_2, \ldots, x_{n-1}$
of the unknown function $x(\cdot)$ at points $t_i$, $i=1,2,\ldots,n-1$.
The values of $x_0$ and $x_n$ are given. Applying the quadrature rule gives
\begin{equation*}
J[x(\cdot)] = \sum_{i=1}^{n}\int_{t_{i-1}}^{t_{i}} L(t_i, x_i, \LDa x_i)dt
\simeq \sum_{i=1}^{n} h L(t_i, x_i, \LDa x_i)
\end{equation*}
and by approximating the fractional derivatives at mesh points using \eqref{GLApprx} we have
\begin{equation}
\label{disFuncl}
J[x(\cdot)]\simeq \sum_{i=1}^{n}hL\left(t_i, x_i, \LDaT x_{i-k}\right).
\end{equation}
Hereafter the procedure is the same as in classical case.
The right hand side of \eqref{disFuncl} can be regarded
as a function $\Psi$ of $n-1$ unknowns
$\mathbf{x}=(x_1,x_2,\ldots,x_{n-1})$,
\begin{equation}
\label{sumFrac}
\Psi(\mathbf{x})=\sum_{i=1}^{n}hL\left(t_i, x_i, \LDaT x_{i-k}\right).
\end{equation}
To find an extremum for $\Psi$, one has to solve
the following system of algebraic equations:
\begin{equation}
\label{AlgSys}
\frac{\partial \Psi}{\partial x_i}=0,\qquad i=1,\ldots,n-1.
\end{equation}
Unlike the classical case, all terms in \eqref{sumFrac}, starting from the $i$th term,
depend on $x_i$ and we have
\begin{equation}
\label{GenSys}
\frac{\partial \Psi}{\partial x_i}
=h\frac{\partial L}{\partial x}(t_i,x_i,\LDaT x_i)
+\sum_{k=0}^{n-i}\frac{h}{h^\a}\w
\frac{\partial L}{\partial\LDa x}(t_{i+k},x_{i+k},\LDaT x_{i+k}).
\end{equation}
Equating the right hand side of \eqref{GenSys} with zero one has
\begin{align*}
\frac{\partial L}{\partial x}(t_i,x_i,\LDaT x_i)
+\frac{1}{h^\a}\sum_{k=0}^{n-i}\w\frac{\partial L}{\partial
\LDa x}(t_{i+k},x_{i+k},\LDaT x_{i+k})=0.
\end{align*}
Passing to the limit and considering the approximation formula
for the right Riemann--Liouville derivative \eqref{GLApprxR},
we can prove the following result.

\begin{thm}
The Euler-like method for a fractional variational problem
of the form \eqref{MainProblem} is equivalent
to the fractional Euler--Lagrange equation
\begin{equation*}
\frac{\partial L}{\partial x}+\RD\frac{\partial L}{\partial\LDa x}=0,
\end{equation*}
as the mesh size, $h$, tends to zero.
\end{thm}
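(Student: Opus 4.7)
The plan is to pass to the limit $h \to 0^+$ in the discrete optimality condition
\begin{equation*}
\frac{\partial L}{\partial x}(t_i,x_i,\LDaT x_i) + \frac{1}{h^\a}\sum_{k=0}^{n-i}\w \frac{\partial L}{\partial\LDa x}(t_{i+k},x_{i+k},\LDaT x_{i+k}) = 0
\end{equation*}
derived immediately before the statement. One fixes an arbitrary $t\in(a,b)$ and a sequence of meshes with $t_i\to t$ (so that $i/n \to (t-a)/(b-a)$), then identifies the limit of each of the two terms separately with a continuous quantity evaluated at $t$, assuming the discrete solutions $x_i$ converge pointwise to an admissible $x(\cdot)$ of enough regularity.

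For the first term, since \eqref{GLApprx} is of first order, $\LDaT x_i = \LDa x(t_i) + \mathcal{O}(h)$; together with continuity of $\partial L/\partial x$ in all three arguments this yields
\begin{equation*}
\frac{\partial L}{\partial x}(t_i,x_i,\LDaT x_i) \longrightarrow \frac{\partial L}{\partial x}(t,x(t),\LDa x(t)).
\end{equation*}

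The decisive step is the second term. I would introduce the auxiliary function
\begin{equation*}
g(s) := \frac{\partial L}{\partial\LDa x}\bigl(s,x(s),\LDa x(s)\bigr)
\end{equation*}
and observe that, in view of \eqref{GLApprxR} and $t_{i+k}=t_i+kh$, the sum $\frac{1}{h^\a}\sum_{k=0}^{n-i}\w g(t_{i+k})$ is precisely the Grünwald--Letnikov approximation of the right Riemann--Liouville derivative $\RD g(t_i)$, and therefore converges to $\RD g(t)$. Combining the two limits produces $\partial L/\partial x + \RD(\partial L/\partial\LDa x)=0$ pointwise, as claimed.

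The main obstacle is the replacement of $g(t_{i+k})$ by the quantity actually appearing in the discrete equation, namely $\partial L/\partial\LDa x$ evaluated at $(t_{i+k},x_{i+k},\LDaT x_{i+k})$. The perturbation at each summand is $\mathcal{O}(h)$ by continuity, but it is multiplied by $h^{-\a}|\w|$ and summed over $\mathcal{O}(1/h)$ indices, so a naive bound is not small. The way around is to use that $|\w|$ decays like $k^{-\a-1}$ and hence $\sum_{k\ge 0}|\w|$ converges for $0<\a<1$; a Lipschitz assumption on $\partial L/\partial\LDa x$ in its third argument then bounds the cumulative error by $\mathcal{O}(h^{1-\a})$, which vanishes with $h$. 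This uniformity estimate, rather than the identification of the limit, is where the real work of the proof lies.
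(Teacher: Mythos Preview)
Your approach is essentially the paper's: start from the discrete optimality condition, fix $t\in(a,b)$ with $t_i\to t$, and identify the second sum as the Gr\"unwald--Letnikov approximation \eqref{GLApprxR} of $\RD g$ at $t_i$. The paper adds one preliminary step you skip, namely a re-derivation of the discrete equation via a variation $\eta$ and Taylor expansion, but this is redundant since the same equation was already obtained from \eqref{GenSys} just before the statement; your choice to take it as given is legitimate. Where you actually go further than the paper is in the limit passage: the paper simply asserts, ``by the continuity assumption,'' that the weighted sum converges to the right fractional derivative, without addressing the perturbation you isolate. Your use of the absolute convergence $\sum_{k\ge 0}|\omega_k^\a|<\infty$ together with a Lipschitz bound on $\partial L/\partial(\LDa x)$ to get an $\mathcal{O}(h^{1-\a})$ error is the honest justification of that step, and supplies what the paper leaves implicit.
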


\begin{proof}
Consider a minimizer $(x_1,\ldots,x_{n-1})$ of $\Psi$, a variation function $\eta\in C[a,b]$
with $\eta(a)=\eta(b)=0$ and define $\eta_i=\eta(t_i)$, for $i=0,\ldots,n$.
We remark that $\eta_0=\eta_n=0$ and that $(x_1+\epsilon\eta_1,\ldots,x_{n-1}+\epsilon\eta_{n-1})$
is a variation of $(x_1,\ldots,x_{n-1})$, with $|\epsilon|<r$, for some fixed $r>0$.
Therefore, since $(x_1,\ldots,x_{n-1})$ is a minimizer for $\Psi$,
proceeding with Taylor's expansion, we deduce that
\begin{multline*}
0 \leq \Psi(x_1+\epsilon\eta_1,\ldots,x_{n-1}
+\epsilon\eta_{n-1})-\Psi(x_1,\ldots,x_{n-1})\\
=\epsilon\sum_{i=1}^n h\left[ \frac{\partial L}{\partial x}[i]\eta_i
+\frac{\partial L}{\partial {_aD_t^{\a}}}[i] \frac{1}{h^\a}
\sum_{k=0}^i(\omega^\a_k) \eta_{i-k} \right]+\mathcal{O}(\epsilon),
\end{multline*}
where here, and in what follows,
$$
[i]=\left(t_i,x_i,\frac{1}{h^\a} \sum_{k=0}^i(\omega^\a_k) x_{i-k} \right).
$$
Since $\epsilon$ takes any value, we can write that
\begin{equation}
\label{sum1}
\sum_{i=1}^n h\left[ \frac{\partial L}{\partial x}[i]\eta_i
+\frac{\partial L}{\partial {_aD_t^{\a}}}[i]
\frac{1}{h^\a}\sum_{k=0}^i(\omega^\a_k) \eta_{i-k} \right]=0.
\end{equation}
On the other hand, since $\eta_0=0$, reordering the terms of the sum,
it follows immediately that
$$
\sum_{i=1}^n \frac{\partial L}{\partial {_aD_t^{\a}}}[i]
\sum_{k=0}^i(\omega^\a_k) \eta_{i-k}= \sum_{i=1}^n \eta_i
\sum_{k=0}^{n-i}(\omega^\a_k)  \frac{\partial L}{\partial {_aD_t^{\a}}}[i+k].
$$
Substituting this relation into equation \eqref{sum1}, we obtain
$$
\sum_{i=1}^n\eta_i h\left[ \frac{\partial L}{\partial x}[i]
+\frac{1}{h^\a} \sum_{k=0}^{n-i}(\omega^\a_k)\frac{\partial
L}{\partial {_aD_t^{\a}}}[i+k] \right]=0.
$$
Since $\eta_i$ is arbitrary, for $i=1,\ldots,n-1$, we deduce that
$$
\frac{\partial L}{\partial x}[i]+\frac{1}{h^\a}
\sum_{k=0}^{n-i}(\omega^\a_k)\frac{\partial L}{\partial
{_aD_t^{\a}}}[i+k]=0, \quad i=1,\ldots,n-1.
$$
Let us study the case when $n$ goes to infinity. Let $\overline{t} \in ]a,b[$
and $i\in\{1,\ldots,n\}$ be such that $t_{i-1}<\overline{t} \leq t_i$. First
observe that in such case, we also have $i\to\infty$ and $n-i\to\infty$.
In fact, let $i\in\{1,\ldots,n\}$ be such that
$a+(i-1)h<\overline{t}\leq a+ih$.
Then, $i<(\overline{t}-a)/h+1$, which implies that
$$
n-i>n\frac{b-\overline{t}}{b-a}-1.
$$
Therefore,
$$
\lim_{n\to\infty,i\to\infty}t_i=\overline{t}.
$$
Assume that there exists a function $\overline{x} \in C[a,b]$ satisfying
$$
\forall \epsilon>0\, \exists N \, \forall n\geq N \,:
|x_i-\overline{x}(t_i)|<\epsilon, \quad \forall i=1,\ldots,n-1.
$$
As $\overline{x}$ is uniformly continuous, we have
$$
\forall \epsilon>0\, \exists N \, \forall n\geq N \,:
|x_i-\overline{x}(\overline{t})|<\epsilon, \quad \forall i=1,\ldots,n-1.
$$
By the continuity assumption of $\overline{x}$, we deduce that
$$
\lim_{n\to\infty,i\to\infty}\frac{1}{h^\a} \sum_{k=0}^{n-i}(\omega^\a_k)\frac{\partial
L}{\partial {_aD_t^{\a}}}[i+k]={_tD^{\a}_b }\frac{\partial L}{\partial
{_aD_t^{\a}}}(\overline{t}, \overline{x} (\overline{t}),{_aD_{\overline{t}}^{\a}}
\overline{x}(\overline{t})).
$$
For $n$ sufficiently large (and therefore $i$ also sufficiently large),
$$
\lim_{n\to\infty,i\to\infty}\frac{\partial L}{\partial x}[i]
=\frac{\partial L}{\partial x}(\overline{t}, \overline{x}(\overline{t}),
{_aD_{\overline{t}}^{\a}} \overline{x}(\overline{t})).
$$
In conclusion,
\begin{equation}
\label{fracELE}
\frac{\partial L}{\partial x}(\overline{t}, \overline{x}(\overline{t}),
{_aD_{\overline{t}}^{\a}} \overline{x}(\overline{t}))+{_tD^{\a}_b }\frac{\partial L}{\partial
{_aD_t^{\a}}}(\overline{t}, \overline{x}(\overline{t}),{_aD_{\overline{t}}^{\a}} \overline{x}(\overline{t}))=0.
\end{equation}
Using the continuity condition, we prove that the fractional Euler--Lagrange equation \eqref{fracELE}
holds for all values on the closed interval $a\leq t\leq b$.
\end{proof}

% ===================================================

\section{Examples}

In this section we try to solve test problems using what has been presented
in Section~\ref{DirMethSec}. For the sake of simplicity, we restrict ourselves
to the interval $[0,1]$. To measure the accuracy of our method we use the maximum norm.
Assume that the exact value of the function $x(\cdot)$, at the point $t_i$, is $x(t_i)$
and it is approximated by $\tilde{x}_i$. The error is defined as
\begin{equation}
\label{Error}
E=\max\{|x(t_i)-\tilde{x}_i|, \quad i=1,\ldots,n-1\}.
\end{equation}

\begin{example}
\label{Example1}
Consider the following minimization problem:
\begin{equation}
\label{Exmp1}
\begin{gathered}
J[x(\cdot)]=\int_0^1 \left(\LDz x(t)-\frac{2}{\Gamma(2.5)}t^{1.5}\right)^2 dt \longrightarrow \min,\\
x(0)=0, \quad x(1)=1.
\end{gathered}
\end{equation}
Problem \eqref{Exmp1} has the obvious solution of the form $\hat{x}(t)=t^2$
due to the positivity of the Lagrangian and the zero value of $J[\hat{x}(\cdot)]$.
Using the approximation
\begin{equation*}
\LDz x(t_i)\simeq \frac{1}{h^{0.5}} \sum_{k=0}^{i}\wh x(t_i-kh)
\end{equation*}
for a fixed $h$, and following the routine discussed
in Section~\ref{DirMethSec}, we approximate problem \eqref{Exmp1} by
\begin{equation*}
\Psi(\mathbf{x})=\sum_{i=1}^{n}h\left(\frac{1}{h^{0.5}}
\sum_{k=0}^{i}\wh x_{i-k}-\frac{2}{\Gamma(2.5)}t^{1.5}_i\right)^2.
\end{equation*}
Since the Lagrangian in this example is quadratic, system \eqref{AlgSys}
is linear and therefore easy to solve. Other problems may end with
a system of nonlinear equations. Simple calculations lead to
\begin{equation}
\label{Ex1Sys}
\mathbf{A}\mathbf{x}=\mathbf{b},
\end{equation}
in which
\begin{equation*}
\mathbf{A}=\left[\begin{array}{llll}
\sum_{i=0}^{n-1}A_i^2        & \sum_{i=1}^{n-1}A_{i}A_{i-1} & \cdots & \sum_{i=n-2}^{n-1}A_{i}A_{i-n+2} \\
\sum_{i=0}^{n-2}A_{i}A_{i+1} & \sum_{i=1}^{n-2}A_i^2        & \cdots & \sum_{i=n-3}^{n-2}A_{i}A_{i-n+3} \\
\sum_{i=0}^{n-3}A_{i}A_{i+2} & \sum_{i=1}^{n-3}A_{i}A_{i+1} & \cdots & \sum_{i=n-4}^{n-3}A_{i}A_{i-n+4} \\
\vdots                       & \vdots                       & \ddots & \vdots                        \\
\sum_{i=0}^{1}A_{i}A_{i+n-2} & \sum_{i=0}^{1}A_{i}A_{i+n-3} & \cdots & \sum_{i=0}^{1}A_i^2
\end{array}\right],
\end{equation*}
where $A_i=(-1)^{i}h^{1.5}\binom{0.5}{i}$, $\mathbf{x}=(x_1,x_2,\cdots,x_{n-1})^T$
and $\mathbf{b}=(b_1,b_2,\cdots,b_{n-1})^T$ with
\begin{equation*}
b_i=\sum_{k=0}^{n-i}\frac{2h^2A_k}{\Gamma(2.5)}t_{k+i}^{1.5}-A_{n-i}A_0-\left(\sum_{k=0}^{n-i}A_kA_{k+i}\right).
\end{equation*}
The linear system \eqref{Ex1Sys} is easily solved for different values of $n$.
As illustrated in Figure~\ref{Ex1Fig}, by increasing the value of $n$ we get better approximations.
\begin{figure}[!tp]
\begin{center}
\includegraphics[width=8cm]{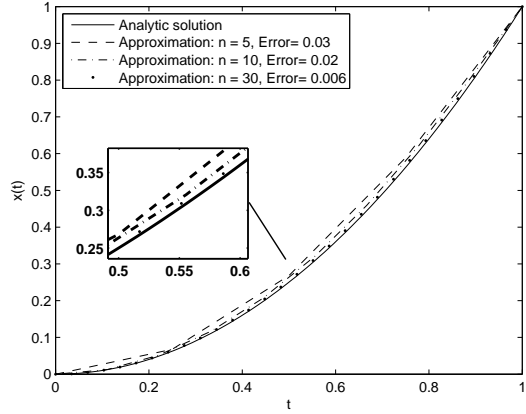}
\end{center}
\caption{Analytic and approximate solutions for
problem \eqref{Exmp1} of Example~\ref{Example1}.}\label{Ex1Fig}
\end{figure}
\end{example}

Although we constructed our theory for problems in the form \eqref{MainProblem},
other operators can be included in the Lagrangian. Let us now move to another example
that depends also on the first derivative and the solution
is obtained via the fractional Euler--Lagrange equation.
Suppose that the objective functional in the fractional variational problem
depends on the left Riemann--Liouville fractional derivative, $\LDa$,
and on the first derivative, $\dot{x}$. If $x(\cdot)$ is an extremizer
of such a problem, then it satisfies the following
fractional Euler--Lagrange equation (for a proof see \cite{odz:2012b}):
\begin{equation}
\label{ELT}
\frac{\partial L}{\partial x}+\RD \frac{\partial L}{\partial \LDa x}
-\frac{d}{dt}\left(\frac{\partial L}{\partial \dot{x}}\right)=0.
\end{equation}

\begin{example}
\label{Example2}
Consider the following minimization problem:
\begin{equation}
\label{Exmp2}
\begin{gathered}
J[x(\cdot)]=\int_0^1 \left(\LDz x(t)-\dot{x}^2(t)\right) dt \longrightarrow \min,\\
x(0)=0, \quad x(1)=1.
\end{gathered}
\end{equation}
In this case the Euler--Lagrange equation \eqref{ELT} gives
$_tD_1^{0.5} 1+2\ddot{x}(t)=0$.
Since $_tD_1^{0.5}1=\frac{(1-t)^{-0.5}}{\Gamma(0.5)}$,
the fractional Euler--Lagrange equation turns
to be an ordinary differential equation:
$$
\ddot{x}(t)=-\frac{1}{2\Gamma(0.5)}(1-t)^{-0.5},
$$
which subject to the given boundary conditions has solution
\begin{equation*}
x(t)=-\frac{1}{2\Gamma(2.5)}(1-t)^{1.5}
+\left(1-\frac{1}{2\Gamma(2.5)}\right)t+\frac{1}{2\Gamma(2.5)}.
\end{equation*}
Discretizing problem \eqref{Exmp2}, with the same assumptions
of Example~\ref{Example1}, ends in the linear system
\begin{equation}
\label{Ex2Sys}
\left[\begin{array}{ccccccc}
2      & -1    & 0     & 0    & \cdots & 0 & 0\\
-1     & 2     & -1    & 0    & \cdots & 0 & 0\\
0      & -1    & 2     & -1   & \cdots & 0 & 0\\
\vdots & \vdots& \vdots&\vdots& \ddots & \vdots&\vdots \\
0      &  0    & 0     &  0   & \cdots &-1 & 2
\end{array}\right]\left[\begin{array}{c}x_1\\x_2\\x_3\\\vdots\\x_{n-1}\end{array}\right]
= \left[\begin{array}{c}b_1\\b_2\\b_3\\\vdots\\b_{n-1}\end{array}\right],
\end{equation}
where
$$
b_i=\frac{h}{2}\sum_{k=0}^{n-i-1}(-1)^{k}h^{0.5}\binom{0.5}{k},
\qquad i=1,2,\ldots,n-2,
$$
and
$$
b_{n-1}=\frac{h}{2}\sum_{k=0}^{1}\left((-1)^{k}h^{0.5}\binom{0.5}{k}\right)+x_n.
$$
The linear system \eqref{Ex2Sys} can be solved efficiently for any $n$ to reach
the desired accuracy. The analytic solution together with some approximations
are shown in Figure~\ref{Ex2Fig}.
\begin{figure}[!tp]
\begin{center}
\includegraphics[width=8cm]{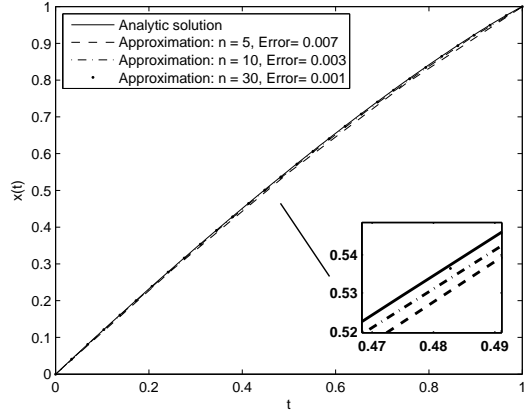}
\end{center}
\caption{Analytic and approximate solutions for problem \eqref{Exmp2}
of Example~\ref{Example2}.}\label{Ex2Fig}
\end{figure}
\end{example}

Both Examples~\ref{Example1} and \ref{Example2} end with linear systems,
and their solvability is simply dependent to the matrix of coefficients.
Now we try our method on a more complicated problem,
yet analytically solvable with an oscillating solution.

\begin{example}
\label{Example3}
Consider the minimization problem \eqref{MainProblem}
with the Lagrangian
\begin{equation}
\label{eq:lag:ex3}
L=\left(\LDz x(t)-\frac{16\Gamma(6)}{\Gamma(5.5)}t^{4.5}+
\frac{20\Gamma(4)}{\Gamma(3.5)}t^{2.5}-\frac{5}{\Gamma(1.5)}t^{0.5}\right)^4
\end{equation}
and subject to the boundary conditions $x(0)=0$ and $x(1)=1$.
The functional $\int_0^1 Ldt$, with nonnegative $L$,
attains its minimum value for
\begin{equation}
\label{eq:exact:sol:ex3}
x(t)=16t^{5}-20t^{3}+5t.
\end{equation}
The appearance of the fourth power in the Lagrangian \eqref{eq:lag:ex3},
results in a nonlinear system when we apply the Euler-like direct method
to this problem. For $j=1,2,\ldots,n-1$ we have
\begin{equation}
\label{Sys3}
\sum_{i=j}^n \left(\omega_{i-j}^{0.5}\right)\left(\frac{1}{h^{0.5}}
\sum_{k=0}^{i}\wh x_{i-k}-\phi(t_i)\right)^3=0,
\end{equation}
where
$$
\phi(t)=\frac{16\Gamma(6)}{\Gamma(5.5)}t^{4.5}
+\frac{20\Gamma(4)}{\Gamma(3.5)}t^{2.5}-\frac{5}{\Gamma(1.5)}t^{0.5}.
$$
System \eqref{Sys3} is solved for different values of $n$
and the results are given in Figure~\ref{Ex3Fig}, where we compare
the obtained approximations with the exact solution \eqref{eq:exact:sol:ex3}.
\begin{figure}[!htp]
\begin{center}
\includegraphics[width=8cm]{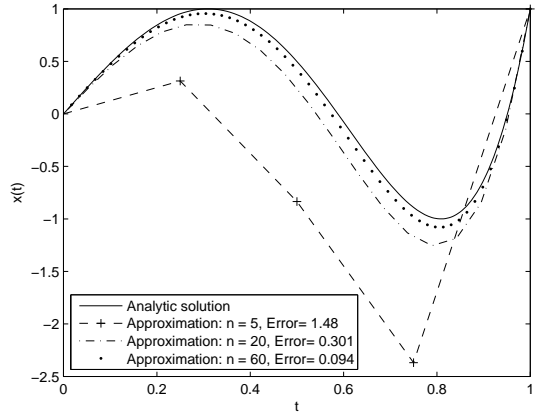}
\end{center}
\caption{Analytic and approximate solutions for problem
of Example~\ref{Example3}.}\label{Ex3Fig}
\end{figure}
\end{example}

% ===================================================

\section{Conclusion}

Roughly speaking, an Euler-like direct method reduces a variational problem
to the solution of a system of algebraic equations. When the system is linear,
we can freely increase the number of mesh points, $n$, and obtain better solutions
as long as the resulted matrix of coefficients is invertible. The method
is very fast in this case and the execution time is of order $10^{-4}$
for Examples~\ref{Example1} and \ref{Example2}. It is worth, however, to keep in mind
that the  Gr\"{u}nwald--Letnikov approximation is of first order, $\mathcal{O}(h)$,
and even a large $n$ can not result in a high precision. Actually, by increasing $n$,
the solution slowly converges and in Example~\ref{Example2}, a grid of $30$ points
has the same order of error, $10^{-3}$, as a $5$ points grid.
The situation is completely different when the problem ends with a nonlinear system.
In Example~\ref{Example3}, a small number of mesh points, $n=5$, results in a poor
solution with the error $E=1.4787$. The Matlab built in function \textsf{fsolve}
takes $0.0126$ seconds to solve the problem. As one increases the number of mesh points,
the solution gets closer to the analytic solution and the required time increases drastically.
Finally, by $n=90$ we have $E=0.0618$ and the time is $T=26.355$ seconds.
Table~\ref{table} summarizes the results.
% -----------------------------------------------------
\begin{table}[!htp]
\center
\begin{tabular}{|c|c|c|c|}
\hline
          &  n &  T   & E \\
\hline
Example 1 &  5  &  $1.9668\times 10^{-4}$ & 0.0264 \\
          &  10 &  $2.8297\times 10^{-4}$ & 0.0158 \\
          &  30 &  $9.8318\times 10^{-4}$ & 0.0065 \\
\hline
Example 2 &  5  &  $2.4053\times 10^{-4}$ & 0.0070 \\
          &  10 &  $3.0209\times 10^{-4}$ & 0.0035 \\
          &  30 &  $7.3457\times 10^{-4}$ & 0.0012 \\
\hline
Example 3 &  5  &  0.0126      & 1.4787 \\
          &  20 &  0.2012      & 0.3006 \\
          &  90 &  26.355      & 0.0618 \\
\hline
\end{tabular}
\caption{Number of mesh points, $n$, with corresponding run time in seconds,
$T$, and error, $E$ \eqref{Error}.}\label{table}
\end{table}
% -----------------------------------------------------
In practice, we have no idea about the solution in advance and the worst case should
be taken into account. Comparing the results of the three examples considered,
reveals that for a typical fractional variational problem,
the Euler-like direct method needs a large number of mesh points
and most likely a long running time.

The Euler-like direct method for fractional variational problems here presented
can be improved in some stages. One can try different approximations
for the fractional derivative that exhibit higher order precisions.
Better quadrature rules can be applied to discretize the functional and, finally,
we can apply more sophisticated algorithms for solving the resulting system of algebraic equations.
Further works are needed to cover different types of fractional variational problems.

% ===================================================

\section*{Acknowledgments}

Work partially presented at The 5th Symposium
on Fractional Differentiation and its Applications (FDA'12),
held in Hohai University, Nanjing, May 14-17, 2012.
Supported by {\it FEDER} funds through
{\it COMPETE} --- Operational Programme Factors of Competitiveness
(``Programa Operacional Factores de Competitividade'')
and by Portuguese funds through the
{\it Center for Research and Development
in Mathematics and Applications} (University of Aveiro)
and the Portuguese Foundation for Science and Technology
(``FCT --- Funda\c{c}\~{a}o para a Ci\^{e}ncia e a Tecnologia''),
within project PEst-C/MAT/UI4106/2011
with COMPETE number FCOMP-01-0124-FEDER-022690.
Pooseh was also supported by FCT through the Ph.D. fellowship
SFRH/BD/33761/2009.

% ===================================================

\small

% ===================================================

\end{document}